\def\beq{\begin{equation}}
\def\eeq{\end{equation}}
\theoremstyle{definition}
\newtheorem{definition}{Definition}
\theoremstyle{plain}
\newtheorem{theorem}{Theorem}
\newtheorem{claim}{Claim}
\newtheorem{conjecture}{Conjecture}
\newtheorem{corollary}{Corollary}
\newtheorem{proposition}{Proposition}
\numberwithin{proposition}{section}
\numberwithin{theorem}{section} \numberwithin{definition}{section}
\numberwithin{claim}{section} \numberwithin{lemma}{section}
\numberwithin{conjecture}{section}
\numberwithin{corollary}{section} \numberwithin{equation}{section}
\numberwithin{example}{section} \numberwithin{remark}{section}
\begin{document}

\title{Sharp results concerning disjoint cross-intersecting families}

\author{Peter Frankl\footnote{R\'enyi Institute, Budapest, Hungary}, Andrey
Kupavskii\footnote{University of Oxford and Moscow Institute of Physics and Technology; Email: {\tt kupavskii@yandex.ru} \ \ The research was supported by the Advanced Postdoc.Mobility grant no. P300P2\_177839 of the Swiss National Science Foundation.}}

\date{}
\maketitle

\begin{abstract}
For an $n$-element set $X$ let $\binom{X}{k}$ be the collection of all its $k$-subsets. Two families of sets $\mathcal A$ and $\mathcal B$ are called cross-intersecting
if $A\cap B \neq \emptyset$ holds for all $A\in\mathcal A$, $B\in\mathcal B$.
Let $f(n,k)$ denote the maximum of $\min\{|\mathcal A|, |\mathcal B|\}$ where the maximum is taken over
all pairs of {\em disjoint}, cross-intersecting families $\mathcal A, \mathcal B\subset\binom{[n]}{k}$.
Let $c=\log_2e$. We prove that
$f(n,k)=\left\lfloor\frac12\binom{n-1}{k-1}\right\rfloor$ essentially iff $n>ck^2$ (cf. Theorem~\ref{th:1.4} for the exact statement).
Let $f^*(n,k)$ denote the same
maximum under the additional restriction that the intersection of all members of both $\mathcal A$ and $\mathcal B$ are empty. For $k\ge5$ and $n\ge k^3$ we show
that $f^*(n,k)=\left\lfloor\frac12\left(\binom{n-1}{k-1}-\binom{n-2k}{k-1}\right)\right\rfloor+1$ and the restriction on $n$ is essentially sharp (cf.
Theorem~\ref{th:5.4}).
\end{abstract}

\section{Introduction}
\label{sec:1}

Let $[n]=\{1,2,3,\dots,n\}$ be the standard $n$-element set and let $2^{[n]}$ denote its power set. A subset $\mathcal F\subset 2^{[n]}$ is called a {\it family}. For $0\le
k\le n$ let $\binom{[n]}{k}=\{F\subset [n]:\, |F|=k\}$. Subsets of $\binom{[n]}{k}$ are called {\it $k$-uniform}. A family $\mathcal F$ is called {\em intersecting} if
$F\cap F'\neq\emptyset$ for all $F,F'\in \mathcal F$. Let us state one of the central results in extremal set theory

\vskip6pt\noindent {\bf Erd\H os--Ko--Rado Theorem ([EKR]).} {\em Suppose that $\mathcal F\subset2^{[n]}$ is intersecting. Then
{\rm (i)} and {\rm (ii)} hold.
\begin{itemize}
\item[{\rm (i)}] $|\mathcal F|\leq 2^{n-1}$.
\item[{\rm (ii)}] Assuming that $\mathcal F$ is $k$-uniform and $n\ge2k$ one has
\end{itemize}}
\begin{equation}
\label{eq:1.1} |\mathcal F|\le\dbinom{n-1}{k-1}.
\end{equation}

We should mention that (i) is a trivial consequence of the fact that $F\in \mathcal F$ implies $([n]\setminus F)\notin\mathcal F$. There are many different proofs for \eqref{eq:1.1}.
E.g., \cite{D}, \cite{Ka2}, \cite{Py}, \cite{FF1}, \cite{HK}, to mention a few.

\begin{definition}
\label{def:1.1} If for some $ x \in[n]$, $x\in F$ for all $F\in\mathcal F$ then $\mathcal F$ is called a {\em star}.

The {\em full star} $\mathcal S_x=\{S\in\binom{[n]}{k}:\, x\in S\}$ shows that \eqref{eq:1.1} is sharp.
\end{definition}

\vskip6pt\noindent {\bf Hilton--Milner Theorem (\cite{HM}).} {\em Suppose that $n > 2k$, $\mathcal F\subset\binom{[n]}{k}$ is intersecting and $\mathcal F$ is not a star.
Then
\begin{equation}\label{eq:1.2}
|\mathcal F|\le\dbinom{n-1}{k-1}-\dbinom{n-k-1}{k-1}+1.
\end{equation}}
\vskip6pt

There are many known proofs for this important result as well. E.g. \cite{FF2}, \cite{F87}, \cite{KZ}, to mention a few.

Trying to prove results about intersecting families one arrives naturally at the following notion.

\begin{definition}
\label{def:1.2}
 Two families $\mathcal A$, $\mathcal B$ are called {\em cross-intersecting} if $A\cap B\neq\emptyset$ for all $A\in\mathcal A$, $B\in\mathcal B$.
\end{definition}

Noting that for $\mathcal A=\mathcal B$ the cross-intersecting property reduces to $\mathcal A$ being intersecting, one can generalize (i) to:
\addtocounter{equation}{-1}
\begin{equation}\label{eq:1.2a}
    |\mathcal A|+|\mathcal B|\le 2^n
\end{equation}
whenever $\mathcal A,\mathcal B\subset2^{[n]}$ are cross-intersecting. Although the bound $|\mathcal A|+|\mathcal B|\le \binom{n}{k}$ is true, for $n > 2k$ it is not
sufficient to derive \eqref{eq:1.1}. However, considering products does the job.

\vskip6pt\noindent
 {\bf Pyber Theorem [Py].} {\em Suppose that $\mathcal A,\mathcal B\subset\binom{[n]}{k}$ are cross-intersecting, $n \geq 2k$.
 Then
 \begin{equation}\label{eq:1.3}
    |\mathcal A|\cdot|\mathcal B|\le \binom{n-1}{k-1}^2.
\end{equation}}

Let us mention \cite{FK1} where a short proof of \eqref{eq:1.1} is given.

The following natural question was first considered in \cite{DF}.

Determine or estimate $f(n)\stackrel{\mathrm{def}}{=}\min\{|\mathcal A|,|\mathcal B|\}$ where $\mathcal A, \mathcal B\subset2^{[n]}$ are cross-intersecting and
$\mathcal A\cap\mathcal B=\emptyset$. The following, rather surprising result was proved in \cite{DF}.
\begin{equation}\label{eq:1.4}
    f(n)<\frac{3-\sqrt5}2\cdot2^n
\end{equation}
and the constant $\frac{3-\sqrt5}2$ is optimal.

The second author was the first to consider the corresponding function $f(n,k)$ for $k$-uniform families.

\setcounter{conjecture}{2}
\begin{conjecture}[\cite{K17}]
\label{conj:1.3} \ \ Suppose that $n>2k\ge4$. Let $f(n,k)=$
$\max\min\{|\mathcal A|,|\mathcal B|\}$ where the maximum is over all disjoint and cross-inter\-sect\-ing families
$\mathcal A, \mathcal B\subset\binom{[n]}{k}$. Then
\begin{equation}\label{eq:1.5}
    f(n,k)=\left\lfloor\frac12\dbinom{n-1}{k-1}\right\rfloor.
\end{equation}
\end{conjecture}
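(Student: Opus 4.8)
The plan is to prove the two directions of \eqref{eq:1.5} separately, the lower bound being routine and the upper bound carrying all the difficulty. For the lower bound, take the full star $\mathcal S_1=\{S\in\binom{[n]}{k}:1\in S\}$ of Definition~\ref{def:1.1} and split it into two disjoint subfamilies $\mathcal A,\mathcal B$ of sizes $\lfloor\frac12\binom{n-1}{k-1}\rfloor$ and $\lceil\frac12\binom{n-1}{k-1}\rceil$. Every member of $\mathcal S_1$ contains $1$, so $\mathcal A$ and $\mathcal B$ are cross-intersecting, and they are disjoint by construction; hence $f(n,k)\ge\lfloor\frac12\binom{n-1}{k-1}\rfloor$. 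Everything therefore reduces to the matching upper bound $\min\{|\mathcal A|,|\mathcal B|\}\le\lfloor\frac12\binom{n-1}{k-1}\rfloor$ for every disjoint cross-intersecting pair.

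For the upper bound I would begin with the standard reduction to shifted families: applying a shift operator $S_{ij}$ $(i<j)$ simultaneously to $\mathcal A$ and $\mathcal B$ leaves $|\mathcal A|$ and $|\mathcal B|$ unchanged and preserves the cross-intersecting property, so it does not alter $\min\{|\mathcal A|,|\mathcal B|\}$; after finitely many shifts both families are shifted. The first real subtlety already surfaces here, because a simultaneous shift need \emph{not} preserve the disjointness $\mathcal A\cap\mathcal B=\emptyset$: the image of a member of $\mathcal A$ can collide with an unshiftable member of $\mathcal B$. I would therefore have to pair each shift with a repair step showing that any collision so created can be removed without lowering $\min\{|\mathcal A|,|\mathcal B|\}$, or else carry the disjointness through the argument as an auxiliary counting inequality.

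With both families shifted, the cleanest target is the \emph{strong sum bound} $|\mathcal A|+|\mathcal B|\le\binom{n-1}{k-1}$, which would instantly give \eqref{eq:1.5}. Pyber's inequality \eqref{eq:1.3} yields only $\min\{|\mathcal A|,|\mathcal B|\}\le\binom{n-1}{k-1}$, so the decisive factor $\tfrac12$ has to come from disjointness. The plan is to split each family according to whether a set contains the element $1$: cross-intersection severely constrains the $1$-avoiding parts, while disjointness forbids the two $1$-containing parts from overlapping inside $\mathcal S_1$, and summing these contributions should reproduce a star-type estimate.

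The step I expect to be the genuine obstacle is that the strong sum bound is simply false for small $n$, so the literal statement \eqref{eq:1.5} for \emph{all} $n>2k$ cannot be proved. The point is that, unlike in the Erd\H os--Ko--Rado setting \eqref{eq:1.1}, the union $\mathcal A\cup\mathcal B$ need not be intersecting: internal disjoint pairs are allowed inside $\mathcal A$ or inside $\mathcal B$, only cross pairs must meet. This extra freedom lets one trade membership in the star for external sets, and a global (rather than one-at-a-time) version of this trade produces disjoint cross-intersecting pairs with $\min\{|\mathcal A|,|\mathcal B|\}>\frac12\binom{n-1}{k-1}$ once $n$ drops below a quadratic threshold in $k$. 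Consequently the real content of the problem is not a single missing lemma but the determination of that threshold: one must show that the split star dominates every competing non-star configuration precisely when $n$ exceeds roughly $ck^2$, which is exactly the dichotomy announced in the abstract and forces \eqref{eq:1.5} to be replaced by its thresholded form.
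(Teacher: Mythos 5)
Your final diagnosis is the correct one, and it matches the paper: \eqref{eq:1.5} is false as stated, holding only above a quadratic threshold $n\approx ck^2$ with $c=\log_2e$ (this is exactly Theorem~\ref{th:1.4}), and your split-star construction correctly gives $f(n,k)\ge\lfloor\frac12\binom{n-1}{k-1}\rfloor$ for all $n>2k$. But the machinery you propose for the positive range would not work, for a reason more basic than the small-$n$ failure you point to. The ``strong sum bound'' $|\mathcal A|+|\mathcal B|\le\binom{n-1}{k-1}$ is false for \emph{every} $n>2k$, not merely below the threshold: take $\mathcal A=\{[k]\}$ and $\mathcal B=\{B\in\binom{[n]}{k}:\,B\cap[k]\ne\emptyset,\ B\ne[k]\}$. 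This pair is disjoint and cross-intersecting, yet $|\mathcal A|+|\mathcal B|=\binom{n}{k}-\binom{n-k}{k}>\binom{n-1}{k-1}$. Unbalanced pairs thus show that no sum-type count can yield the factor $\frac12$; the target has to be the minimum directly. The paper's actual route uses no shifting at all (so your unresolved shift-versus-disjointness repair step is moot) but a three-way case analysis: if some element lies in every member of both families, then $\mathcal A\cup\mathcal B$ sits inside one full star and disjointness gives $\min\{|\mathcal A|,|\mathcal B|\}\le\frac12\binom{n-1}{k-1}$; if $\mathcal A$ is a star at $x$ but some $B_0\in\mathcal B$ misses $x$, then $|\mathcal A|\le\binom{n-1}{k-1}-\binom{n-k-1}{k-1}$; and if neither family is a star, the key imported tool is M\"ors' two-family Hilton--Milner theorem \eqref{eq:2.2}, bounding $\min\{|\mathcal A|,|\mathcal B|\}$ by $\binom{n-1}{k-1}-\binom{n-k-1}{k-1}+1$. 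Comparing that quantity with $\frac12\binom{n-1}{k-1}$ via $1+x<e^x$ is precisely what produces the threshold $n\ge c(k^2-k)+2k-1$; the constant $c=\log_2e$, which your sketch cannot see, comes out of this comparison and not out of any star decomposition.

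On the negative side you only gesture at ``trading membership in the star for external sets''; to actually refute \eqref{eq:1.5} one needs an explicit pair and an estimate. The paper takes $\mathcal F=\{F\in\binom{[n]}{k}:\,F\cap[k]=\{1\}\text{ or }F\cap[k]=\{2,\dots,k\}\}$ and $\mathcal G=\{G\in\binom{[n]}{k}:\,1\in G,\ G\cap\{2,\dots,k\}\ne\emptyset\}$, which are disjoint and cross-intersecting with $|\mathcal F|+|\mathcal G|=\binom{n-1}{k-1}+n-k$, proves $|\mathcal G|>\frac12\binom{n-1}{k-1}$ for $n\le c(k-1)^2+1$ (equivalently $\binom{n-k}{k-1}<\frac12\binom{n-1}{k-1}$, again via the exponential estimate), and then \emph{rebalances}: since $\mathcal G$ is intersecting, one may move a subfamily $\mathcal G_0\subset\mathcal G$ across to form $\mathcal A=\mathcal F\cup\mathcal G_0$, $\mathcal B=\mathcal G\setminus\mathcal G_0$ without damaging the cross-intersecting property, pushing both sizes strictly above $\frac12\binom{n-1}{k-1}$. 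That rebalancing step is the precise form of your ``global trade'', and without it --- together with the binomial estimate fixing where it works --- the claimed failure below $\approx ck^2$ remains an assertion rather than a proof.
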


A few months later Huang and independently the present authors disproved \eqref{eq:1.5} for $n<k^2$ (cf. \cite{H}). On the positive side Huang \cite{H} proved
\eqref{eq:1.5} for $n>2k^2$.

In the present paper we determine the range where \eqref{eq:1.5} holds almost completely. Set $c=\log_2e$ and note that $c<3/2$.

\setcounter{theorem}{3}
\begin{theorem}
\label{th:1.4}
 \begin{itemize}
 \item[{\rm (i)}] If $n\ge ck^2+(2-c)k$ then \eqref{eq:1.5} is true.
 \item[{\rm (ii)}] If $n\le ck^2-2ck+1$ then \eqref{eq:1.5} fails.
 \end{itemize}
 \end{theorem}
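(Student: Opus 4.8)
The lower bound in (i) is the easy direction and in fact holds for every $n\ge 2k$: partition the full star $\mathcal S_1$ into two disjoint subfamilies $\mathcal A,\mathcal B$ of sizes $\lfloor\frac12\binom{n-1}{k-1}\rfloor$ and $\lceil\frac12\binom{n-1}{k-1}\rceil$. Since every member of either family contains the element $1$, any $A\in\mathcal A$ and $B\in\mathcal B$ share $1$, so $\mathcal A,\mathcal B$ are cross-intersecting and disjoint by construction; this gives $f(n,k)\ge\lfloor\frac12\binom{n-1}{k-1}\rfloor$. Thus (i) reduces to the matching upper bound, and (ii) to producing, for $n\le ck^2-2ck+1$, a disjoint cross-intersecting pair with $\min\{|\mathcal A|,|\mathcal B|\}$ strictly larger than this floor.

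For the upper bound in (i) I would assume without loss of generality $a:=|\mathcal A|\le|\mathcal B|=:b$ and suppose for contradiction that $a>\lfloor\frac12\binom{n-1}{k-1}\rfloor$, so that $a+b>\binom{n-1}{k-1}$. One cannot simply compress: the $(i,j)$-shift preserves the cross-intersecting property and both cardinalities, but it may destroy disjointness (for instance $\{i,x\}\in\mathcal A$ and $\{j,x\}\in\mathcal B$ both map to $\{i,x\}$), so the problem must be attacked directly. The clean case is when $\mathcal A\cup\mathcal B$ is intersecting; since $\mathcal A,\mathcal B$ are already cross-intersecting, this happens exactly when each of $\mathcal A,\mathcal B$ is intersecting, and then $\mathcal A\cup\mathcal B$ is an intersecting $k$-uniform family, so \eqref{eq:1.1} yields $a+b=|\mathcal A\cup\mathcal B|\le\binom{n-1}{k-1}$ and hence $2a\le\binom{n-1}{k-1}$, a contradiction. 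This case needs no restriction on $n$.

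The entire difficulty lies in the case where $\mathcal A\cup\mathcal B$ is not intersecting. A disjoint pair cannot be split between the two families, so it sits inside one of them. If the larger family $\mathcal B$ contains a matching $F_1,\dots,F_t$, then every $A\in\mathcal A$ meets all $F_i$, forcing
\[
a\le\sum_{j=0}^{t}(-1)^{j}\binom{t}{j}\binom{n-jk}{k};
\]
a matching of size $t>k$ gives $\mathcal A=\emptyset$, and one of size close to $k$ already pushes $a$ far below the floor. The real obstacle is the opposite extreme, when the matching number $\nu(\mathcal B)$ is small: two disjoint sets only give $a\le\binom{n}{k}-2\binom{n-k}{k}+\binom{n-2k}{k}$, which for $n\approx ck^2$ overshoots the floor by a factor of order $k$. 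To close this, one must use that $\mathcal B$ is \emph{both} large and of small matching number, hence essentially a star by a Hilton--Milner type stability (cf.\ \eqref{eq:1.2}), and then transport almost all of $\mathcal A\cup\mathcal B$ into a single star to recover $a+b\le\binom{n-1}{k-1}$ up to a controlled error (the symmetric situation, where the disjoint pair lies in the smaller family $\mathcal A$, forces $\mathcal B$ itself to be intersecting and is handled the same way via $\mathcal A\subseteq\{C:C\cap B\ne\emptyset\ \forall B\in\mathcal B\}$). Balancing the large-matching and small-matching regimes is precisely what produces the threshold: writing $p=\binom{n-k}{k}/\binom{n}{k}\approx e^{-k^2/n}$, the error terms cross the floor exactly when $e^{-k^2/n}$ passes $\tfrac12$, i.e.\ when $k^2/n=\ln 2$, equivalently $n=ck^2$ with $c=\log_2 e$. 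Carrying out this balance sharply --- rather than with the crude $p^2$ or single-set bounds, which only yield $n\gtrsim 2k^2$ or $n\gtrsim k^3$ --- is the main technical work, and I expect it to be the hardest step.

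For (ii) I would supply the matching construction: a disjoint cross-intersecting pair beating the floor must have $\mathcal A\cup\mathcal B$ non-intersecting, since otherwise \eqref{eq:1.1} caps $\min\{|\mathcal A|,|\mathcal B|\}$ at the floor. Building it from a maximal family of pairwise disjoint $k$-sets together with their ``meets-all'' partners, and splitting the sets that meet everything so as to balance $|\mathcal A|$ and $|\mathcal B|$, one estimates $\min\{|\mathcal A|,|\mathcal B|\}$ against $\frac12\binom{n-1}{k-1}$ using the same ratio $p\approx e^{-k^2/n}$; the construction wins exactly below $n=ck^2$ up to the lower-order $O(k)$ corrections recorded in the statement, confirming that the constant $c=\log_2 e$ is sharp. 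The delicate point here is the second-order expansion of the binomial ratios, which is what fixes the additive $O(k)$ gap between the thresholds in (i) and (ii).
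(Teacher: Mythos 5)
Your lower-bound construction for (i) (splitting the full star $\mathcal S_1$ into two halves) is correct, and your heuristic that the threshold sits where $\binom{n-k-1}{k-1}/\binom{n-1}{k-1}\approx e^{-k^2/n}$ crosses $\tfrac12$ is exactly the right intuition. But both substantive directions have genuine gaps.

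For the upper bound in (i), the decisive ingredient you are missing is a \emph{two-family Hilton--Milner theorem} (M\"ors's theorem, \eqref{eq:2.2}): if $\mathcal A,\mathcal B$ are cross-intersecting and neither is a star, then $\min\{|\mathcal A|,|\mathcal B|\}\le\binom{n-1}{k-1}-\binom{n-k-1}{k-1}+1$. With this in hand the whole proof is a short case analysis (both stars: the union has size at most $\binom{n-1}{k-1}$; exactly one a star: the cross-intersecting condition alone gives $|\mathcal A|\le\binom{n-1}{k-1}-\binom{n-k-1}{k-1}$; neither a star: M\"ors) followed by the ratio computation $\binom{n-1}{k-1}/\binom{n-k-1}{k-1}<(1+\frac{k}{ck(k-1)})^{k-1}<e^{1/c}=2$. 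Your proposed substitute --- a dichotomy on the matching number of $\mathcal B$ plus an unspecified ``Hilton--Milner type stability'' --- is a plan, not a proof: you yourself compute that the two-disjoint-sets bound overshoots the floor by a factor of order $k$, and the ``transport almost all of $\mathcal A\cup\mathcal B$ into a single star'' step that is supposed to close this gap is precisely the content that is absent. Moreover the target you set in that branch is of the wrong shape: in the non-star case one cannot hope to recover $a+b\le\binom{n-1}{k-1}$ even approximately as the route to the bound --- the extremal configurations for (ii) satisfy $|\mathcal F|+|\mathcal G|=\binom{n-1}{k-1}+n-k$ --- what one needs and what M\"ors provides is a bound on the \emph{minimum} directly.

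For (ii) you give no explicit construction and compute nothing, so this direction is not established. The paper's construction is concrete: take $\mathcal F=\{F:F\cap[k]=\{1\}\text{ or }F\cap[k]=[2,k]\}$ and $\mathcal G=\{G:1\in G,\ G\cap[2,k]\ne\emptyset\}$; these are disjoint and cross-intersecting with $|\mathcal F|+|\mathcal G|=\binom{n-1}{k-1}+n-k$, and for $n\le c(k-1)^2+1$ one checks $\binom{n-k}{k-1}<\frac12\binom{n-1}{k-1}$, i.e.\ $|\mathcal G|>\frac12\binom{n-1}{k-1}$, so after moving a few sets from $\mathcal G$ to $\mathcal F$ both families exceed the floor. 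Your sketch (``a maximal family of pairwise disjoint $k$-sets together with their meets-all partners'') does not obviously produce such a pair, and without the second-order estimate you defer to the end, the claim that it beats the floor for $n\le ck^2-2ck+1$ is unsupported.
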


\section{Tools of proofs}
\label{sec:2}

For a family $\mathcal F$  and a positive integer $l$ define the {\em $l$-shadow} $\sigma^{(l)}(\mathcal F)=\{G:\, |G|=l, \ \exists F\in \mathcal F,\, G\subset F\}$.
In other words, $\sigma^{(l)}(\mathcal F) = \bigcup\limits_{F\in\mathcal F}\binom{F}{l}$.

One of the most important results in extremal set theory is the {\em Kruskal--Katona Theorem}. For given positive integers $m, k, l$, $k>l$ it determines the minimum
of $|\sigma^{(l)}(\mathcal F)|$ where $\mathcal F$ is $k$-uniform and $|\mathcal F|=m$.

It was Daykin's proof \cite{D} of the Erd\H os--Ko--Rado Theorem that established the connection between these two important theorems. To state this connection in the
more general setting of pairs of cross-intersecting families let us define the {\em lexicographic order}, $<_L$ on $\binom{[n]}{t}$, $1\le t\le n$. For $G,H\in
\binom{[n]}{t}$,
\[
 G<_L H\text{ iff }\min\{i:\, i\in G\setminus H\}<\min\{j:\, j\in H\setminus G\}.
 \]
That is, $\{1,\,2,\,77\}<_L\{1,\,3,\,4\}$.

For fixed $t$ and $m$, $1\le m < \binom{n}{t}$ let $\mathcal L(n,t,m)$ denote the {\em initial segment}, the first $m$ subsets of size $t$ in the lexicographic order.

Let now $a,b$ be positive integers, $a+b<n$. Hilton \cite{Hi} observed that $\mathcal A\subset\binom{[n]}{a}$ and $\mathcal B\subset\binom{[n]}{b}$ are
cross-intersecting iff
\[
\mathcal A\cap\sigma^{(a)}(\mathcal B^c)=\emptyset,
\]
where $\mathcal B^c=\{[n]\setminus B:\, B\in\mathcal B\}$ is the family of complements. This permits the following equivalent formulation of  the Kruskal--Katona Theorem.

\vskip6pt\noindent
 {\bf Kruskal--Katona Theorem (\cite{Ka1}, \cite{Kr}).}
{\em Let $a,b$ be positive integers, $n>a+b$. Suppose that $\mathcal A\subset\binom{[n]}{a}$ and $\mathcal B\subset\binom{[n]}{b}$ are cross-intersecting. Then
$\mathcal L(n,a,|\mathcal A|)$ and $\mathcal L(n,b,|\mathcal B|)$ are cross-intersecting too.} \vskip6pt

To get the reader familiar with this formulation let us show an easy consequence that we need later.

\begin{corollary}
\label{cor:2.1}
 Let $n>2k>0$ and suppose that $\mathcal G\subset\binom{[n-1]}{k-1}$ and $\mathcal H\subset\binom{[n-1]}{k}$ are cross-intersecting and $|\mathcal
 G|>\binom{n-1}{k-1} - \binom{n-k}{k-1}$. Then
 \begin{equation}\label{eq:2.1}
    |\mathcal H|\le k-1.
\end{equation}
\end{corollary}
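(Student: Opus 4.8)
The plan is to invoke the cross-intersecting form of the Kruskal--Katona Theorem to replace $\mathcal G$ and $\mathcal H$ by the initial segments $\mathcal L(n-1,k-1,|\mathcal G|)$ and $\mathcal L(n-1,k,|\mathcal H|)$, which are again cross-intersecting and have the same cardinalities; since the conclusion only bounds $|\mathcal H|$, it suffices to treat this case. Here the ambient set is $[n-1]$ and the sizes are $a=k-1$, $b=k$ with $n-1>a+b=2k-1$, so the hypothesis $n>2k$ is exactly what the theorem requires. We may thus assume $\mathcal G$ and $\mathcal H$ are initial segments in the lexicographic order.

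Next I would read off the structure of the initial segment $\mathcal G$. A $(k-1)$-subset of $[n-1]$ lies entirely in $\{k,k+1,\dots,n-1\}$ iff it avoids $\{1,\dots,k-1\}$, and any such set is $<_L$-larger than every $(k-1)$-set meeting $\{1,\dots,k-1\}$ (the former has minimum element $\ge k$, the latter minimum $\le k-1$). There are $\binom{n-k}{k-1}$ sets of the first kind, so the first $\binom{n-1}{k-1}-\binom{n-k}{k-1}$ members of the lex order are precisely the $(k-1)$-sets meeting $\{1,\dots,k-1\}$. As $|\mathcal G|>\binom{n-1}{k-1}-\binom{n-k}{k-1}$, the family $\mathcal G$ contains all of these together with at least one further set, necessarily the $<_L$-smallest $(k-1)$-subset of $\{k,\dots,n-1\}$, namely $\{k,k+1,\dots,2k-2\}$.

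Then I would use cross-intersection to pin down the members of $\mathcal H$. Fix $H\in\mathcal H$ and $j\in\{1,\dots,k-1\}$. If $j\notin H$, then since $|[n-1]\setminus H|=n-1-k\ge k-1$ (using $n>2k$) one can choose a $(k-1)$-set $G$ with $j\in G\subseteq[n-1]\setminus H$; this $G$ meets $\{1,\dots,k-1\}$, hence lies in $\mathcal G$, yet $G\cap H=\emptyset$, a contradiction. Therefore $\{1,\dots,k-1\}\subseteq H$, and as $|H|=k$ we have $H=\{1,\dots,k-1\}\cup\{x\}$ for some $x\in\{k,\dots,n-1\}$. Finally $H$ must also meet $\{k,\dots,2k-2\}\in\mathcal G$, which forces $x\in\{k,\dots,2k-2\}$; this set has $k-1$ elements, so there are at most $k-1$ admissible sets $H$, giving $|\mathcal H|\le k-1$.

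The routine parts are the cardinality bookkeeping and the lexicographic comparison; the step carrying the real content is the observation that meeting every $(k-1)$-set through a fixed point $j$ forces $j$ into $H$, and it is precisely here that the hypothesis $n>2k$ (guaranteeing enough room in $[n-1]\setminus H$) is indispensable. I expect no serious obstacle beyond handling the degenerate small case $k=1$ separately, where the statement holds trivially.
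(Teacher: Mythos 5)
Your proposal is correct and follows essentially the same route as the paper: apply the cross-intersecting formulation of the Kruskal--Katona Theorem to pass to lexicographic initial segments, observe that $\mathcal G$ must then contain all $(k-1)$-sets meeting $[k-1]$ together with $[k,2k-2]$, and conclude that the only admissible $k$-sets are $[k-1]\cup\{j\}$ for $k\le j\le 2k-2$. You merely spell out in more detail the step the paper states without proof, namely why intersecting every $(k-1)$-set that meets $[k-1]$ forces $[k-1]\subseteq H$.
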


\begin{proof}
 The first $\binom{n-1}{k-1}-\binom{n-k}{k-1}=\binom{n-2}{k-2}+\dots+\binom{n-k}{k-2}$ subsets $G\in\binom{[n-1]}{k-1}$ are $\mathcal L(n-1,\,
 k-1,\,\binom{n-1}{k-1}-\binom{n-k}{k-1})=\{G\in\binom{[n-1]}{k-1}:\, G\cap[k-1]\neq\emptyset\}$. The next $(k-1)$-set is $[k,\, 2k-2]$. The only $k$-subsets
 intersecting {\em all} of them are $[k-1]\cup\{j\}, \, k\le j\le2k-2$.\end{proof}

The proof of Theorem \ref{th:1.4} (i) is based on the following result extending the Hilton--Milner Theorem to two families.

\vskip6pt\noindent
 {\bf M\"{o}rs Theorem (\cite{M}).} {\em Suppose that $\mathcal A,\mathcal B\subset\binom{[n]}{k}$ are cross-intersecting, $n>2k>0$ and neither $\mathcal A$ nor
 $\mathcal B$ is a star. Then
 \begin{equation}\label{eq:2.2}
    \min\{|\mathcal A|, |\mathcal B|\}\le\dbinom{n-1}{k-1}-\dbinom{n-k-1}{k-1}+1.
\end{equation}}

Let us note that for $k=1$ all the problems considered so far are trivial. For $k=2$, and neither $\mathcal A$ nor
 $\mathcal B$ being a star, either $\mathcal A=\mathcal B=\binom{T}{2}$ for a 3-element set $T$, or one of $\mathcal A$ and $\mathcal B$ consists of two pairwise
 disjoint 2-sets and the other is a subset of the four 2-sets intersecting both. Therefore from now on we are going to assume $k\ge 3$.

 In a recent work \cite{FK2} we extended the M\"{o}rs Theorem using the notion of diversity (cf. the definition and statement in
 Section~\ref{sec:4}). This result appears to be essential in establishing the exact value of $f^*(n,k):=\min\bigl\{|\mathcal A|,|\mathcal B|:\,\mathcal A,\mathcal
 B\subset \binom{[n]}{k},\ \mathcal A\cap\mathcal B=\emptyset$, $\mathcal A$  and $\mathcal B$ are cross-intersecting, and neither of them is a star$\bigr\}$.

\setcounter{proposition}{1}
 \begin{proposition}
\label{prop:2.2} For $n>2k$ one has
\begin{equation}\label{eq:2.3}
    f^*(n,k)\ge\left\lfloor\frac12\left(\binom{n-1}{k-1}-\binom{n-2k}{k-1}\right)\right\rfloor+1.
\end{equation}
\end{proposition}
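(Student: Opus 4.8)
The plan is to prove \eqref{eq:2.3} by an explicit construction: I split a suitable sub-star of the full star $\mathcal S_1$ into two balanced halves and adjoin one extra set to each half to destroy the star property. Write $N=\binom{n-1}{k-1}-\binom{n-2k}{k-1}$ for the target quantity, set $C=\{2,3,\dots,2k\}$, and put $\mathcal H=\{H\in\binom{[n]}{k}:\,1\in H,\ H\cap C\neq\emptyset\}$. A $k$-set containing $1$ misses $C$ exactly when its remaining $k-1$ elements lie in $\{2k+1,\dots,n\}$, so $|\mathcal H|=N$. Every member of $\mathcal H$ contains $1$, hence $\mathcal H$ is intersecting and any two subfamilies of it are automatically cross-intersecting. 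The two anchors will be $A^*=\{2,\dots,k+1\}$ and $B^*=\{k+1,\dots,2k\}$; note that $1\notin A^*\cup B^*$, that $A^*\cap B^*=\{k+1\}\neq\emptyset$, and that $A^*\cup B^*=C$.

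Next I distribute $\mathcal H$ between the two families according to which anchor each set meets. Since $A^*\cup B^*=C$, every $H\in\mathcal H$ meets $A^*$ or $B^*$. I force each $H$ meeting \emph{only} $B^*$ into $\mathcal A$ and each $H$ meeting \emph{only} $A^*$ into $\mathcal B$, while the sets meeting both are free to go either way; finally I set $\mathcal A=\{A^*\}\cup(\text{its assigned part of }\mathcal H)$ and $\mathcal B=\{B^*\}\cup(\text{its assigned part of }\mathcal H)$. Cross-intersection is then immediate: two members of $\mathcal H$ share the element $1$; each member of $\mathcal A\cap\mathcal H$ meets $B^*$ and each member of $\mathcal B\cap\mathcal H$ meets $A^*$ by the assignment rule; and $A^*\cap B^*\neq\emptyset$. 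Disjointness holds because the assignment partitions $\mathcal H$, and the anchors, lying outside $\mathcal H$, are distinct. Neither family is a star: $A^*$ avoids $1$ while, for every other candidate common element, some member of $\mathcal A\cap\mathcal H$ misses it (here I use $n>2k$ to guarantee enough sets), and symmetrically for $\mathcal B$.

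It remains to count. The involution $i\mapsto 2k+2-i$ on $C$ (fixing $1$ and $\{2k+1,\dots,n\}$ pointwise) preserves $\mathcal H$ and interchanges the ``meets $A^*$ only'' sets with the ``meets $B^*$ only'' sets, so these two forced classes have equal size, each therefore at most $N/2$. Consequently the free ``meets both'' class can be divided so that the two parts of $\mathcal H$ have sizes $\lfloor N/2\rfloor$ and $\lceil N/2\rceil$, and adjoining one anchor to each family yields $\min\{|\mathcal A|,|\mathcal B|\}=\lfloor N/2\rfloor+1$, which is exactly \eqref{eq:2.3}. The only delicate points are the balancing step, which rests on the symmetry between the two forced classes just noted (ensuring neither already exceeds a half), and the verification of the non-star property down to $n>2k$; I expect this last, essentially bookkeeping, check to be the main obstacle, since the rest follows directly once the anchors and the assignment rule are fixed.
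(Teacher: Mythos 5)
Your construction is correct and is essentially the paper's own proof: the paper likewise takes two $k$-sets $P,Q$ avoiding $1$ with $|P\cap Q|=1$, forms the two (overlapping) families of star members meeting $Q$ resp.\ $P$ together with the anchors, and then splits the common part equitably to get disjoint families of sizes $\lfloor N/2\rfloor+1$ and $\lceil N/2\rceil+1$. Your involution argument for balancing the two forced classes and the explicit non-star check are just slightly more detailed versions of the paper's ``remove equitably'' step, so no further changes are needed.
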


\section{The proof of Theorem \ref{th:1.4} and Proposition \ref{prop:2.2}}
\label{sec:3}

Let $\mathcal A,\mathcal B\subset\binom{[n]}{k}$ be disjoint and cross-intersecting. If one of them (say $\mathcal A$) is a star then there are two cases. For
definiteness suppose $1\in A$ for all $A\in\mathcal A$. Should $1\in B$ hold for all $B\in\mathcal B$ then $|\mathcal A\cup\mathcal B|\le\binom{n-1}{k-1}$ implies
\eqref{eq:1.5}.

If there is some $B_0\in\mathcal B$ with $1\notin B_0$ then the cross-intersecting property implies
\begin{equation}\label{eq:3.1}
    |\mathcal A|\le\dbinom{n-1}{k-1}-\dbinom{n-k-1}{k-1}.
\end{equation}

On the other hand, if neither of $\mathcal A$ and $\mathcal B$ is a star, we may apply the M\"{o}rs Theorem and get the bound \eqref{eq:2.2}. Consequently, if
\[
\dbinom{n-1}{k-1}-\dbinom{n-k-1}{k-1}+1\le\frac12\dbinom{n-1}{k-1}
\]
then \eqref{eq:1.5} holds.
Equivalently
\begin{equation}\label{eq:3.2}
    \dbinom{n-k-1}{k-1}\ge\frac12\dbinom{n-1}{k-1}+1.
\end{equation}
Recall the inequality $1 + x < e^x$ valid for all $x$. Suppose that $n\ge c(k^2-k)+2k-1$. Then
\[
\dbinom{n-1}{k-1}\!\Big/\! \dbinom{n\! -\! k\! -\! 1}{k-1}= \! \prod_{1\le i\le k-1} \! 1+\frac{k}{n-k-i}<\left(\! 1 + \frac{k}{ck(k\! -\! 1)}\! \right)^{\! k-1}\! < e^{\frac1c}=2.
\]
This proves \eqref{eq:3.2} for the corresonding range thereby establishing Theorem~\ref{th:1.4} (i).

Let us now prove that for $n\le c(k-1)^2+1$,
\begin{equation}\label{eq:3.3}
    \dbinom{n-1}{k-1}-\dbinom{n-k}{k-1}>\frac12\dbinom{n-1}{k-1}.
\end{equation}
Equivalently,
\begin{equation}\label{eq:3.4}
    \dbinom{n-k}{k-1}<\frac12\dbinom{n-1}{k-1}.
\end{equation}
We derive \eqref{eq:3.4} from the following chain of equalities and inequalities:
\[
\frac{\dbinom{n-k}{k-1}}{\dbinom{n-1}{k-1}}<\left(\frac{n-k}{n-1}\right)^{k-1}=\left(1-\frac{k-1}{n-1}\right)^{k-1}\le\left(1-\frac{k-1}{c(k-1)^2}\right)^{k-1}
\]
\[
=\left(1-\frac{1}{c(k-1)}\right)^{k-1}<e^{-\frac1c}=\frac12.
\]
To use \eqref{eq:3.3} let us define two families $\mathcal F$ and $\mathcal G$.
\begin{align*}
\mathcal F &=\left\{F\in\binom{[n]}{k}:\ F\cap[k]=\{1\} \text{ or } \{2, 3, \dots, k\}\right\},\\
\mathcal G &=\left\{G\in\binom{[n]}{k}:\ 1\in G, \ G\cap \{2, 3, \dots, k\}\neq\emptyset\right\}.
\end{align*}
Then
\begin{equation}\label{eq:3.5}
    |\mathcal F|+|\mathcal G|=\dbinom{n-1}{k-1}+n-k
\end{equation}
and the families $\mathcal F,\mathcal G$ are cross-intersecting while $\mathcal G$ is intersecting. In view of \eqref{eq:3.3}, $|\mathcal G|>\frac12\binom{n-1}{k-1}$.

If $|\mathcal F|>\frac12\binom{n-1}{k-1}$ then we are done.

Suppose $|\mathcal F|\le\frac12\binom{n-1}{k-1}$ and let $\mathcal G_0\subset\mathcal G$ be an arbitrary subfamily satisfying $|\mathcal
G_0|=\lfloor\frac12\binom{n-1}{k-1}\rfloor+1-|\mathcal F|$. Set $\mathcal A=\mathcal F\cup\mathcal G_0$, $\mathcal B=\mathcal G\setminus\mathcal G_0$. By definition
$|\mathcal A|=\lfloor\frac12\binom{n-1}{k-1}\rfloor+1$ and  \eqref{eq:3.5} implies $|\mathcal B|>\frac12\binom{n-1}{k-1}$ as well. This completes the proof  of
Theorem~\ref{th:1.4}.\hfill $\square$

\medskip
Let us turn to the proof of Proposition~\ref{prop:2.2}.

Let $P,Q\in\binom{[2n]}{k}$ satisfy $P\cap Q=\{2\}$. Define
\[
\mathcal A= \left\{A\in\binom{[n]}{k}:\, 1\in A,\ A\cap Q\neq\emptyset\right\}\cup\{P\},
\]
\[
\mathcal B= \left\{B\in\binom{[n]}{k}:\, 1\in B,\ B\cap P\neq\emptyset\right\}\cup\{Q\}.
\]

It is easy to see that $\mathcal A$ and $\mathcal B$ are cross-intersecting and neither of them is a star. However, they are not disjoint.

Noting $|\mathcal A|=|\mathcal B|$ and $|\mathcal A\cup\mathcal B|=\binom{n-1}{k-1}-\binom{n-2k}{k-1}+2$, one can remove equitably the members of $\mathcal
A\cap\mathcal B$ from exactly one of the two families to obtain $\mathcal A_0\subset\mathcal A$, $\mathcal B_0\subset\mathcal B$,
\[
|\mathcal A_0|=\left\lfloor\frac12\left(\dbinom{n-1}{k-1}-\dbinom{n-2k}{k-1}\right)\right\rfloor+1,
\]
\[
|\mathcal B_0|=\left\lceil\frac12\left(\dbinom{n-1}{k-1}-\dbinom{n-2k}{k-1}\right)\right\rceil+1.
\]

Obviously, $\mathcal A_0$ and $\mathcal B_0$ are cross-intersecting and for $k \geq 2$ neither of them is a star. This concludes the proof of \eqref{eq:2.3}.\hfill $\square$

\section{In the grey zone}
\label{sec:4}

In the previous section we proved Theorem~\ref{th:1.4}. To be more exact, we proved that
\[
f(n,k)=\left\lfloor\frac12\dbinom{n-1}{k-1}\right\rfloor\quad\text{if}\quad\frac12\dbinom{n-1}{k-1}\ge\dbinom{n-1}{k-1}-\dbinom{n-k-1}{k-1}
\]
and also
\[
f(n,k)>\frac12\dbinom{n-1}{k-1}\quad\text{if}\quad\frac12\dbinom{n-1}{k-1}\le\dbinom{n-1}{k-1}-\dbinom{n-k}{k-1}.
\]
Let us try and say something about the ``grey zone'', about the narrow range where
\begin{equation}\label{eq:4.1}
    \dbinom{n-1}{k-1}-\dbinom{n-k}{k-1}<\frac12\dbinom{n-1}{k-1} < \dbinom{n - 1}{k - 1} -\dbinom{n-k-1}{k-1}
\end{equation}
or equivalently
\begin{equation}\label{eq:4.2}
    \dbinom{n-k-1}{k-1}<\frac12\dbinom{n-1}{k-1}<\dbinom{n-k}{k-1}.
\end{equation}

Let us prove that even if \eqref{eq:1.5} fails it is almost true.

\begin{proposition}
\label{prop:4.1}
 Suppose that \eqref{eq:4.1} hold for the pair $(n,k)$, $n>2k$. Then
 \begin{equation}\label{eq:4.3}
    f(n,k)\le\frac12\left(\dbinom{n-1}{k-1}+ n - k - 1\right).
\end{equation}
\end{proposition}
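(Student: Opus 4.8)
The plan is to fix a single element and compare the two families through it. For $x\in[n]$ write $\mathcal A(x)=\{A\in\mathcal A:\,x\in A\}$ and $\mathcal A(\bar x)=\mathcal A\setminus\mathcal A(x)$, and similarly for $\mathcal B$. Since $\mathcal A$ and $\mathcal B$ are disjoint, $\mathcal A(x)$ and $\mathcal B(x)$ are disjoint subfamilies of the $\binom{n-1}{k-1}$ sets containing $x$, so $|\mathcal A(x)|+|\mathcal B(x)|\le\binom{n-1}{k-1}$, whence
\[
\min\{|\mathcal A|,|\mathcal B|\}\le\tfrac12\bigl(|\mathcal A|+|\mathcal B|\bigr)\le\tfrac12\Bigl(\binom{n-1}{k-1}+|\mathcal A(\bar x)|+|\mathcal B(\bar x)|\Bigr).
\]
Thus the proposition reduces to exhibiting one element $x$ with $|\mathcal A(\bar x)|+|\mathcal B(\bar x)|\le n-k-1$. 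First I would discard the trivial regime: if $\min\{|\mathcal A|,|\mathcal B|\}\le\frac12\binom{n-1}{k-1}$ we are done, so I may assume both cardinalities exceed $\frac12\binom{n-1}{k-1}$, and hence, by the left inequality in \eqref{eq:4.1}, both exceed $D:=\binom{n-1}{k-1}-\binom{n-k}{k-1}$.

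The second step handles the case where one family concentrates on an element. Suppose $|\mathcal A(x)|>D$ for some $x$ (this covers the case that $\mathcal A$ is a star). Deleting $x$ turns $\mathcal A(x)$ into a family in $\binom{[n]\setminus\{x\}}{k-1}$ that is cross-intersecting with $\mathcal B(\bar x)\subset\binom{[n]\setminus\{x\}}{k}$; since its size exceeds $D$, Corollary~\ref{cor:2.1}, applied on the $(n-1)$-element ground set $[n]\setminus\{x\}$, forces $|\mathcal B(\bar x)|\le k-1$. Now either $|\mathcal B(x)|>D$ as well, in which case the symmetric application gives $|\mathcal A(\bar x)|\le k-1$, and as the grey-zone inequalities force $n=\Theta(k^2)$ (in particular $2(k-1)\le n-k-1$) the displayed reduction yields \eqref{eq:4.3} with this $x$; or $|\mathcal B(x)|\le D$, in which case $|\mathcal B|=|\mathcal B(x)|+|\mathcal B(\bar x)|\le D+(k-1)<\frac12\binom{n-1}{k-1}+(k-1)$, and since $k-1\le\frac12(n-k-1)$ the bound \eqref{eq:4.3} holds comfortably. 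The situation $|\mathcal B(x)|>D$ for some $x$ is symmetric. So in all cases where \emph{some} element has degree exceeding $D$ in $\mathcal A$ or in $\mathcal B$, the claim follows from Corollary~\ref{cor:2.1} alone.

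There remains the genuinely hard case: neither family is a star and \emph{every} element has degree at most $D$ in each of $\mathcal A,\mathcal B$ (equivalently, both families have large diversity), while still $|\mathcal A|,|\mathcal B|>\frac12\binom{n-1}{k-1}$. Here the reduction of the first paragraph is useless, since no element carries a large share of either family and $|\mathcal A(\bar x)|+|\mathcal B(\bar x)|$ is large for every $x$; and the M\"ors bound \eqref{eq:2.2} is by itself too weak, because in the grey zone its right-hand side $\binom{n-1}{k-1}-\binom{n-k-1}{k-1}+1$ exceeds $\frac12\binom{n-1}{k-1}$ --- this is precisely the right inequality in \eqref{eq:4.1}. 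The hard part will be to show that the low-maximum-degree hypothesis pushes $\min\{|\mathcal A|,|\mathcal B|\}$ down to $\frac12\binom{n-1}{k-1}$ plus only the lower-order term $\frac12(n-k-1)$. This is exactly what the diversity strengthening of M\"ors' theorem (the statement referred to in Section~\ref{sec:4} and established in \cite{FK2}) is designed to deliver: upgrading the qualitative hypothesis ``not a star'' to the quantitative ``large diversity'' improves the bound on $\min\{|\mathcal A|,|\mathcal B|\}$ by an amount that, in precisely the high-diversity regime isolated above, brings it below $\frac12\bigl(\binom{n-1}{k-1}+n-k-1\bigr)$. I expect converting the diversity of the two cross-intersecting families into the sharp additive loss $\frac12(n-k-1)$ to be the crux of the whole argument; unlike the first two steps it cannot be extracted from Corollary~\ref{cor:2.1} and the Kruskal--Katona theorem alone, since lexicographic compression raises the degree of a single element and therefore destroys the diversity information on which the estimate relies.
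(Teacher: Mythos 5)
Your first two steps are sound and essentially reproduce the paper's treatment of the degenerate cases: the inequality $|\mathcal A(x)|+|\mathcal B(x)|\le\binom{n-1}{k-1}$ coming from disjointness, and Corollary~\ref{cor:2.1} applied to the link of a high-degree element (the paper only invokes it when $\mathcal A$ is literally a star, but your version with the threshold $D=\binom{n-1}{k-1}-\binom{n-k}{k-1}$ is a harmless generalization, and your verification that $2(k-1)\le n-k-1$ in the grey zone is correct for $k\ge3$). The problem is that your third case --- neither family has an element of degree exceeding $D$ --- is not a proof but a declaration of intent. You name the right tool (the diversity extension of M\"ors' theorem, Theorem~\ref{th:4.3}) but you neither state the quantitative conclusion you need from it nor verify that it supplies one, and you explicitly concede that this is ``the crux of the whole argument.'' That crux is exactly where the paper does its work: it applies Theorem~\ref{th:4.3} with $u=k-1$ and obtains a dichotomy --- either $\min\{|\mathcal A|,|\mathcal B|\}\le\binom{n-1}{k-1}-\binom{n-k}{k-1}+\binom{n-k}{n-k-1}$, which together with the left inequality of \eqref{eq:4.1} already bounds the minimum, or both families share a common maximum-degree element $x$ with $|\mathcal A(\bar x)|\le n-k-1$ and $|\mathcal B(\bar x)|\le n-k-1$, after which your own step-one inequality finishes. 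Without carrying out this application, nothing is proved in the main regime.

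A second, smaller point: your reduction asks for a single $x$ with $|\mathcal A(\bar x)|+|\mathcal B(\bar x)|\le n-k-1$, whereas Theorem~\ref{th:4.3} only yields the two bounds separately, i.e.\ a sum of at most $2(n-k-1)$. That gives $\frac12\binom{n-1}{k-1}+(n-k-1)$ rather than $\frac12\bigl(\binom{n-1}{k-1}+n-k-1\bigr)$; the paper's own concluding display in fact lands on the former, weaker quantity, so there is a factor-of-two discrepancy between the proof and the statement of \eqref{eq:4.3} as written. You should therefore not expect the hard case to meet the target you set in your first paragraph; once you fill the gap, be prepared to obtain (and perhaps to state) the bound with additive term $n-k-1$ in place of $\frac12(n-k-1)$.
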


\begin{proof}
Let again $\mathcal A,\mathcal B\subset\binom{[n]}{k}$ be disjoint and cross-intersecting, moreover, $\min\{|\mathcal A|,|\mathcal B|\}\ge\frac12\binom{n-1}{k-1}$. If
both are stars then \eqref{eq:1.5} holds. Suppose now that $\mathcal A$ is a star where $n\in A$ for all $A\in\mathcal A$. In view of \eqref{eq:4.1} we may apply
Corollary~\ref{cor:2.1} with $\mathcal G=\mathcal A(n)=\big\{A\setminus\{n\}:\, A\in\mathcal A\big\}$, $\mathcal H=\mathcal B(\bar n)=\{B\in\mathcal B:\, n\notin B\}$.
This gives $|\mathcal H|\le k-1$. Except for $\mathcal H$, all members of $\mathcal A\cup\mathcal B$ contain $n$. Thus we infer
\[
|\mathcal A\cup\mathcal B|\le\dbinom{n-1}{k-1}+k-1.
\]
Since $\mathcal A$ and $\mathcal B$ are disjoint, \eqref{eq:4.3} follows.

The case that remains is when neither $\mathcal A$ nor $\mathcal B$ is a star. To deal with this case we need the notion of diversity and the extension of M\"{o}rs
Theorem.

\setcounter{definition}{1}
\begin{definition}
\label{def:4.2} For a family $\mathcal F\in 2^{[n]}$ define its {\em diversity} $\gamma(\mathcal F)$ by $\gamma(\mathcal F)=\min\{|\mathcal F(\bar i)|:\, i\in [n]\}$.
\end{definition}

It should be clear that $\mathcal F$ is a star iff $\gamma(\mathcal F)=0$.

\setcounter{theorem}{2}
\begin{theorem}[\cite{FK2}]
\label{th:4.3} Suppose that $\mathcal A,\mathcal B\subset\binom{[n]}{k}$ are cross-intersecting, $n>2k$, $k>3$. Let $u$ be an integer, $3\le u\le k$ and suppose that
\begin{equation}\label{eq:4.4}
    \min\{|\mathcal A|,|\mathcal B|\}\ge\dbinom{n-1}{k-1}-\dbinom{n-u-1}{k-1}+\dbinom{n-u-1}{n-k-1}.
\end{equation}
Then either equality holds in \eqref{eq:4.4} for both $\mathcal A$ and $\mathcal B$ or
\begin{equation}\label{eq:4.5}
    \max\{\gamma(\mathcal A),\gamma(\mathcal B)\}<\dbinom{n-u-1}{n-k-1},
\end{equation}
moreover both families share the same (unique) element of maximum degree.
\end{theorem}

Let us return to the proof of Proposition \ref{prop:4.1}.
We apply Theorem~\ref{th:4.3} with $u=k-1$. If
\[
\min\{|\mathcal A|,|\mathcal B|\}\le\binom{n-1}{k-1}-\binom{n-k}{k-1}+\binom{n-k}{n-k-1}
\]
 then via \eqref{eq:4.1}
 \[
 \min\{|\mathcal A|,|\mathcal B|\}\le\frac12\binom{n-1}{k-1}+n-k-1
\]
follows.

In the opposite case from \eqref{eq:4.4} we derive
\[
\max\{\gamma(\mathcal A),\gamma(\mathcal B)\}\le n-k-1.
\]
By symmetry we may assume
\[
\max\{|\mathcal A(\bar n)|,|\mathcal B(\bar n)|\}\le n-k-1.
\]
By disjointness of $\mathcal A$ and $\mathcal B$ we infer
\[
|\mathcal A(n)| + |\mathcal B(n)|\le\dbinom{n-1}{k-1}.
\]
Thus
\[
\frac{|\mathcal A|+|\mathcal B|}2\le \frac12 \dbinom{n-1}{k-1}+n-k-1
\]
follows.
\end{proof}

\section{Determining $f^*(n,k)$ for $n\ge k(k+5)$, $k\ge5$}
\label{sec:5}

\newcommand{\ma}{\mathcal A}
\newcommand{\mb}{\mathcal B}

Throughout this section let $k\ge5$, $n\ge k(k+5)$ and let $\mathcal A, \mathcal B\subset\binom{[n]}{k}$ be disjoint, cross-intersecting. Moreover we assume that
neither of $\ma$ and $\mb$ is a star.

Since we are trying to determine $f^*(n,k)$, in view of Proposition~\ref{prop:2.2} we may assume that
\begin{equation}\label{eq:5.1}
    \min\{|\mathcal A|,|\mathcal B|\}>\frac12\left(\dbinom{n-1}{k-1}-\dbinom{n-2k}{k-1}\right).
\end{equation}

First we show that the above conditions on $n$ and $k$ guarantee that  Theorem~\ref{th:4.3} can be applied  with some $3\le u\le k$.

\begin{claim}\label{cl:5.1}
\begin{equation}\label{eq:5.2}
\min\{|\mathcal A|,|\mathcal B|\}>\dbinom{n-1}{k-1}-\dbinom{n-4}{k-1}-\dbinom{n-4}{n-k-1}.
\end{equation}
\end{claim}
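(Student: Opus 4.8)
The statement \eqref{eq:5.2} is purely numerical once the standing hypothesis \eqref{eq:5.1} is invoked, so the plan is to bound the right-hand side of \eqref{eq:5.1} from below by the right-hand side of \eqref{eq:5.2}. First I would rewrite the last term by complementation inside an $(n-4)$-set, $\binom{n-4}{n-k-1}=\binom{n-4}{k-3}\ge0$. Since \eqref{eq:5.1} already gives the strict bound $\min\{|\mathcal A|,|\mathcal B|\}>\frac12\bigl(\binom{n-1}{k-1}-\binom{n-2k}{k-1}\bigr)$, it then suffices to prove the clean estimate
\[
2\binom{n-4}{k-1}\ \ge\ \binom{n-1}{k-1}+\binom{n-2k}{k-1},
\]
which I will call $(\ast)$: indeed $2\bigl(\mathrm{RHS}\,\eqref{eq:5.1}-\mathrm{RHS}\,\eqref{eq:5.2}\bigr)$ equals the slack in $(\ast)$ plus the nonnegative quantity $2\binom{n-4}{k-3}$, so $(\ast)$ delivers \eqref{eq:5.2}.

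To prove $(\ast)$ I would rewrite it as $\binom{n-4}{k-1}-\binom{n-2k}{k-1}\ge\binom{n-1}{k-1}-\binom{n-4}{k-1}$ and telescope each side with the Pascal identity $\binom{m}{k-1}-\binom{m-1}{k-1}=\binom{m-1}{k-2}$. This turns $(\ast)$ into the transparent shadow-type inequality
\[
\binom{n-2}{k-2}+\binom{n-3}{k-2}+\binom{n-4}{k-2}\ \le\ \binom{n-5}{k-2}+\binom{n-6}{k-2}+\dots+\binom{n-2k}{k-2},
\]
a comparison of $3$ consecutive binomials on the left against $2k-4$ (slightly smaller) consecutive ones on the right. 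Here the hypothesis $k\ge5$ enters decisively: it guarantees $2k-4\ge6>3$, so the right-hand side has strictly more terms than the left.

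The final step is to bound the two sides crudely: the left-hand side is at most $3\binom{n-2}{k-2}$, while the right-hand side is at least $(2k-4)\binom{n-2k}{k-2}$, reducing everything to
\[
\frac{\binom{n-2}{k-2}}{\binom{n-2k}{k-2}}=\prod_{i=0}^{k-3}\Bigl(1+\frac{2k-2}{n-2k-i}\Bigr)\ \le\ \frac{2k-4}{3}.
\]
Each factor, hence the product, is decreasing in $n$, so it suffices to treat the worst case $n=k(k+5)$, where the estimate $1+x<e^{x}$ already used in Section~\ref{sec:3} bounds the product by $\exp\bigl(\tfrac{(2k-2)(k-2)}{k^2+2k+3}\bigr)$. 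I expect the main obstacle to be precisely this last inequality at $k=5$, which is genuinely tight: the crude bounds leave only a small margin there, so the exponential estimate must be applied with care (and one checks that the reduced inequality already fails at $k=4$, confirming that $k\ge5$ is the correct threshold). For all larger $k$ the exponent on the left tends to $2$ while $\frac{2k-4}{3}\to\infty$, so the margin only widens.
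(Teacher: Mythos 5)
Your proof is correct and follows essentially the same route as the paper's: both reduce \eqref{eq:5.2} to a purely numerical consequence of \eqref{eq:5.1} by telescoping differences of $\binom{\cdot}{k-1}$'s into sums of consecutive $\binom{n-j}{k-2}$'s and then comparing a few large terms against many slightly smaller ones via a ratio estimate (the paper shows $\frac12\sum_{j=2}^{10}\binom{n-j}{k-2}>3\binom{n-2}{k-2}$ using the consecutive-ratio bound $11/12$; you show $\sum_{j=2}^{4}\binom{n-j}{k-2}\le\sum_{j=5}^{2k}\binom{n-j}{k-2}$ using an $e^x$ bound on the extreme ratio, and your tight case $k=5$, $n=k(k+5)$ does check out). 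One caveat: you take the last term of \eqref{eq:5.2} with the printed minus sign and discard $\binom{n-4}{k-3}$ as a nonnegative bonus, whereas the paper's own proof (and the later application through \eqref{eq:5.4}, which needs the bound with $+\binom{n-u-1}{n-k-1}$) treats that term with a plus sign; your argument has enough slack to absorb the extra $2\binom{n-4}{k-3}$ even at $k=5$, $n=k(k+5)$, but as written it only establishes the weaker printed inequality, so you should note this if the claim is to be used as the paper intends.
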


\begin{proof} One can rewrite the RHS as
\[
\dbinom{n-2}{k-2}+\dbinom{n-3}{k-2}+\dbinom{n-4}{k-2}+\dbinom{n-4}{k-3}=\dbinom{n-2}{k-2}+2\dbinom{n-3}{k-2}<3\dbinom{n-2}{k-2}.
\]
 In view of \eqref{eq:5.1} it is sufficient to show that
 \begin{equation}\label{eq:5.3}
 \dbinom{n-2}{k-2}+\dbinom{n-3}{k-2}+\dots+\dbinom{n-10}{k-2}>6\dbinom{n-2}{k-2}.
\end{equation}

Noting that ${\binom{n-i-1}{k-2}}\Bigm/{\binom{n-i}{k-2}} = 1-\frac{k-2}{n-i}\geq 1-\frac{k-2}{n-8}$ for $2\le i\le 8$,
using $n\ge k^2+5k>(k-2)(k+7)+8$ we infer that the above ratio is always more than
\[
1-\frac{k-2}{(k-2)(k+7)}=1-\frac1{k+7}\ge\frac{11}{12}.
\]
Now \eqref{eq:5.3} follows from $1+\frac{11}{12}+(\frac{11}{12})^2+\dots+(\frac{11}{12})^7=12\cdot\big(1-(\frac{11}{12})^8\big)$ and $(\frac{11}{12})^8<\frac12$.
\end{proof}

Now let $u$ be the maximal integer, $3\le u\le k$ such that
\begin{equation}\label{eq:5.4}
\min\{|\mathcal A|,|\mathcal B|\}>\dbinom{n-1}{k-1}-\dbinom{n-u-1}{k-1}+\dbinom{n-u-1}{n-k-1}.
\end{equation}
In view of Theorem~\ref{th:4.3} we may assume that
\[
\max\{|\ma(\bar 1)|,|\mb(\bar 1)|\}<\dbinom{n-u-1}{n-k-1}.
\]
Since neither $\ma$ nor $\mb$ is a star, $u<k$ follows.

Our next goal is to show that for $A\in\ma(\bar 1)$ and $B\in\mb(\bar 1)$ necessarily
\begin{equation}\label{eq:5.5}
    |A\cap B|=1.
\end{equation}

Suppose for contradiction that $|A\cap B|\ge2$. Set $D=A\cup B$. Note that $|D|=2k-|A\cap B|\le 2k-2$.

The cross-intersecting property implies that every $C\in (\ma(1)\cup\mb(1))$ satisfies $D\cap C\ne\emptyset$.
Consequently,
\begin{footnotesize}\begin{equation}\label{eq:5.6}
|\ma(1)\cup\mb(1)|\le\dbinom{n-1}{k-1}-\dbinom{n-2k+1}{k-1} = \dbinom{n - 1}{k - 1} -\dbinom{n-2k}{k-1}-\dbinom{n-2k}{k-2}.
\end{equation}\end{footnotesize}
By \eqref{eq:5.4} we have
\[
|\ma(\bar 1)|+|\mb(\bar 1)|<\dbinom{n-4}{k-4}.
\]
Comparing with \eqref{eq:5.1} we infer
\begin{equation}
\label{eq:5.7}
2 {n - 4\choose k - 4} > {n - 2k \choose k - 2}.
\end{equation}
However, simple computation shows that this inequality fails for $n \geq k(k + 5)$.

Indeed,
\[
\frac{\dbinom{n-2k}{k-2}}{\dbinom{n-4}{k-4}}=\frac{(n-2k)(n-2k-1)}{(k-2)(k-3)}\prod_{2\le i\le k-3}\left(1-\frac{2(k-1)}{n-2-i}\right).
\]
For $k\ge 5$ and $n \geq k(k+5)$,
$\frac{n-2k-1}{k-3} > \frac{n-2k}{k-2} > k + 3 \geq 8$.
Also $\frac{2(k - 1)}{n - (k - 1)} < \frac2{k + 5}$ and $\left(1 - \frac2{k + 5}\right)^{k - 2} > e^{-2}$.
These show that
\[
\dbinom{n-2k}{k-2}>\dbinom{n-4}{k-4}\left(\frac8e\right)^2,
\]
contradicting \eqref{eq:5.7}. Now \eqref{eq:5.5} is proved.

\begin{claim}\label{cl:5.2}
If $A,A' \in\ma(\bar 1)$ and $B,B'\in\mb(\bar 1)$ then
\[
A\cup B=A'\cup B'.
\]
\end{claim}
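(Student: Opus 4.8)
The plan is to prove the claim in the reduced form: for every fixed $B\in\mathcal B(\bar 1)$ the set $A\cup B$ is independent of the choice of $A\in\mathcal A(\bar 1)$, together with the symmetric statement that for fixed $A$ the set $A\cup B$ is independent of $B\in\mathcal B(\bar 1)$. Granting both, one chains $A\cup B=A\cup B'=A'\cup B'$, which is exactly the assertion. By \eqref{eq:5.5} every such union has size $|A\cup B|=2k-1$, so the first reduced statement is equivalent to $A\setminus B=A'\setminus B$. Since the two reduced statements are interchanged by swapping the roles of $\mathcal A$ and $\mathcal B$, it suffices to treat the first one.

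So I would fix $B\in\mathcal B(\bar 1)$, take $A,A'\in\mathcal A(\bar 1)$ with $A\cup B\neq A'\cup B$, and aim for a contradiction. As $|A\cup B|=|A'\cup B|=2k-1$ and these two $(2k-1)$-sets are distinct, their union satisfies $|A\cup A'\cup B|\geq 2k$. The key step — and the one place where disjointness of $\mathcal A$ and $\mathcal B$ is essential — is to bound $|\mathcal A(1)|+|\mathcal B(1)|$ from above by folding the two cross-intersecting constraints into a single set. Every $C\in\mathcal A(1)$ meets $B$ (since $C\in\mathcal A$, $B\in\mathcal B$), so $\mathcal A(1)\subseteq X:=\{C:1\in C,\ C\cap B\neq\emptyset\}$; every $C\in\mathcal B(1)$ meets both $A$ and $A'$ (since $C\in\mathcal B$, $A,A'\in\mathcal A$), so $\mathcal B(1)\subseteq Y:=\{C:1\in C,\ C\cap A\neq\emptyset\neq C\cap A'\}$. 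Because $\mathcal A\cap\mathcal B=\emptyset$, the families $\mathcal A(1)$ and $\mathcal B(1)$ are disjoint, whence $|\mathcal A(1)|+|\mathcal B(1)|\leq|X\cup Y|$.

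It then remains to estimate $|X\cup Y|$. A set $C$ with $1\in C$ fails to lie in $X\cup Y$ exactly when $C\cap(A\cup B)=\emptyset$ or $C\cap(A'\cup B)=\emptyset$; since $1\notin A\cup A'\cup B$, inclusion–exclusion gives
\[
|X\cup Y|=\binom{n-1}{k-1}-\binom{n-1-|A\cup B|}{k-1}-\binom{n-1-|A'\cup B|}{k-1}+\binom{n-1-|A\cup A'\cup B|}{k-1}.
\]
Using $|A\cup B|=|A'\cup B|=2k-1$ and $|A\cup A'\cup B|\geq 2k$, this yields $|X\cup Y|\leq\binom{n-1}{k-1}-2\binom{n-2k}{k-1}+\binom{n-2k-1}{k-1}$. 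On the other hand \eqref{eq:5.1} gives $|\mathcal A(1)|+|\mathcal B(1)|=|\mathcal A|+|\mathcal B|-|\mathcal A(\bar 1)|-|\mathcal B(\bar 1)|>\binom{n-1}{k-1}-\binom{n-2k}{k-1}-\bigl(|\mathcal A(\bar 1)|+|\mathcal B(\bar 1)|\bigr)$.

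Comparing the two bounds and invoking the Pascal identity $\binom{n-2k}{k-1}-\binom{n-2k-1}{k-1}=\binom{n-2k-1}{k-2}$, I obtain
\[
\binom{n-2k-1}{k-2}<|\mathcal A(\bar 1)|+|\mathcal B(\bar 1)|<2\binom{n-u-1}{n-k-1}\leq 2\binom{n-4}{k-3},
\]
the last inequality because $u\geq 3$. Since $n\geq k(k+5)$ forces $\binom{n-2k-1}{k-2}>2\binom{n-4}{k-3}$ (the two binomials differ by a factor of order $n/k$, as a direct ratio computation shows), this is the desired contradiction, and the symmetric case follows by interchanging $\mathcal A$ and $\mathcal B$. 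The main obstacle is precisely this estimate: controlling $|\mathcal A(1)|$ and $|\mathcal B(1)|$ separately by their individual covering constraints is too weak to beat the lower bound from \eqref{eq:5.1}, and it is only the disjointness-driven passage to $|X\cup Y|$, which produces the extra term $\binom{n-2k-1}{k-2}$, that dominates the (very small) diversity bound on $|\mathcal A(\bar 1)|+|\mathcal B(\bar 1)|$.
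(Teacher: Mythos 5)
Your proof is correct and follows essentially the same route as the paper: every member of $\mathcal A(1)\cup\mathcal B(1)$ must meet two distinct $(2k-1)$-element unions, inclusion--exclusion then improves the bound of \eqref{eq:5.6} by an extra $\binom{n-2k-1}{k-2}$, and this gain overwhelms the diversity bound $2\binom{n-u-1}{n-k-1}\le 2\binom{n-4}{k-3}$ on $|\mathcal A(\bar 1)|+|\mathcal B(\bar 1)|$. The only organizational difference is that you first fix $B$ and vary $A$ (then chain), whereas the paper treats general $A,A',B,B'$ at once; the resulting counts coincide, and your write-up usefully makes explicit the details the paper compresses into ``a slight difference \dots in exactly the same way.''
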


\begin{proof}
Set $D=A\cup B$, $D'=A'\cup B'$ and suppose indirectly $D\neq D'$. As we pointed out before, the cross-intersecting property implies both $C\cap D\neq\emptyset$ and
$C\cap D'\neq\emptyset$ for all $C\in\ma(1)\cup\mb(1)$. It is easy to see that the total number of $(k-1)$-subsets $\overset\sim C\subset[2n]$, intersecting both  $D$
and $D'$ is largest if $|D\cap D'|=2k-2$.

In that case the number is
\begin{gather*}
 \dbinom{n-1}{k-1}-\dbinom{n-2k+1}{k-1}+\dbinom{n-2k-1}{k-3}\\=\dbinom{n-1}{k-1}-\dbinom{n-2k}{k-1}-\left(\dbinom{n-2k}{k-2}-\dbinom{n-2k-1}{k-3}\right).
\end{gather*}

It is only a slight difference with respect to \eqref{eq:5.6} and we can get a contradiction in exactly the same way.
\end{proof}

\begin{claim}\label{cl:5.3}
$\min\{|\ma(\bar 1)|, |\mb(\bar 1)|\}=1$.
\end{claim}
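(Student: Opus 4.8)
The plan is to cash in the rigid structure already extracted in \eqref{eq:5.5} and Claim~\ref{cl:5.2}. By \eqref{eq:5.5} any $A\in\ma(\bar 1)$ and $B\in\mb(\bar 1)$ meet in exactly one point, so $|A\cup B|=2k-1$; combining this with Claim~\ref{cl:5.2} there is a \emph{single} $(2k-1)$-element set $D$ with
\[
A\cup B=D\qquad\text{for all }A\in\ma(\bar 1),\ B\in\mb(\bar 1),
\]
and in particular every such $A$ and $B$ is a $k$-subset of $D$. First I would check that both links are nonempty: since $1$ is the common element of maximum degree supplied by Theorem~\ref{th:4.3}, we have $\gamma(\ma)=|\ma(\bar 1)|$ and $\gamma(\mb)=|\mb(\bar 1)|$, and neither diversity vanishes because neither $\ma$ nor $\mb$ is a star. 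Hence $\min\{|\ma(\bar 1)|,|\mb(\bar 1)|\}\ge 1$, and the whole task reduces to excluding that both sizes are at least $2$.

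Next I would argue by contradiction. Suppose there are distinct $A_1,A_2\in\ma(\bar 1)$ and distinct $B_1,B_2\in\mb(\bar 1)$. For each $j\in\{1,2\}$ the relations $A_1\cup B_j=D=A_2\cup B_j$ mean that every element of $D$ outside $A_i$ lies in $B_j$, i.e. $B_j\supseteq (D\setminus A_1)\cup(D\setminus A_2)=D\setminus(A_1\cap A_2)$. As $A_1\neq A_2$ are $k$-subsets of $D$ we have $|A_1\cap A_2|\le k-1$, so $|D\setminus(A_1\cap A_2)|=(2k-1)-|A_1\cap A_2|\ge k$. Since $|B_j|=k$, this forces $|A_1\cap A_2|=k-1$ and $B_j=D\setminus(A_1\cap A_2)$, simultaneously for $j=1$ and $j=2$. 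Thus $B_1=B_2$, contradicting their distinctness. Consequently the two families cannot both have size $\ge 2$, and with the lower bound above this yields $\min\{|\ma(\bar 1)|,|\mb(\bar 1)|\}=1$.

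I do not expect a genuine obstacle here, since the real work was front-loaded into Claims~\ref{cl:5.1}--\ref{cl:5.2} and \eqref{eq:5.5}, which pin $\ma(\bar 1)$ and $\mb(\bar 1)$ inside the small ground set $D$ as essentially cross-disjoint complement families; once that picture is in place the remainder is an elementary pigeonhole on a $(2k-1)$-set. The two points to be careful about are that the cardinality count uses $|D|=2k-1$ exactly (so that $D\setminus(A_1\cap A_2)$ has exactly $k$ elements when $A_1,A_2$ are as close as possible), and that the non-star hypothesis is what guarantees both links are nonempty, upgrading the conclusion from ``$\le 1$'' to ``$=1$''.
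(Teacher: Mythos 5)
Your proof is correct. Like the paper, you argue by contradiction from the assumption that both $|\mathcal A(\bar 1)|\ge 2$ and $|\mathcal B(\bar 1)|\ge 2$, and you rely on exactly the same two inputs, namely \eqref{eq:5.5} and Claim~\ref{cl:5.2}; but the mechanics are genuinely different. The paper normalizes WLOG to $[2,k+1]\in\mathcal A(\bar 1)$ and $[k+1,2k]\in\mathcal B(\bar 1)$, takes a second pair $(A,B)$, and runs a case analysis on the location $j$ of the unique point of $A\cap B$ inside $[2,2k]$, concluding first $B=\{j\}\cup[k+2,2k]$ and then $A=[2,k+1]$, a contradiction. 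Your containment computation --- from $A_1\cup B_j=A_2\cup B_j=D$ one gets $B_j\supseteq D\setminus(A_1\cap A_2)$, a set of size at least $(2k-1)-(k-1)=k$, which forces $B_1=B_2=D\setminus(A_1\cap A_2)$ --- reaches the same contradiction with no normalization and no case split, and it isolates cleanly the one place where $|D|=2k-1$ (i.e.\ \eqref{eq:5.5}) is needed. Your observation that the non-star hypothesis gives $|\mathcal A(\bar 1)|,|\mathcal B(\bar 1)|\ge 1$, so that the claim really is an equality and not just the bound $\le 1$, is correct and is left implicit in the paper. Both arguments are elementary and of comparable length; yours is arguably the more transparent of the two.
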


\begin{proof}
Suppose the contrary. WLOG let $[2,k+1]\in\ma(\bar 1)$, $[k+1,2k]\in\mb(\bar 1)$. Choose some different $A\in\ma(\bar 1)$ and $B\in\mb(\bar 1)$. By Claim~\ref{cl:5.2},
$A\cup B=[2,2k]$. By \eqref{eq:5.5}, $A\cap B=\{j\}$ for some $j\in [2,2k]$.

In the case $j=k+1$, $A\neq[2,k+1]$ implies $|A\cap[k+1,2k]|\ge2$, contradicting \eqref{eq:5.5}. Suppose by symmetry $2\le j\le k$. Now \eqref{eq:5.5} implies
$[2,k+1]\cap B=\{j\}$ whence $B=\{j\}\cup [k+2,2k]$. From $A\cap B=\{j\}$, $A=[2,k+1]$ follows which is in contradiction with our choice $A\neq[2,k+1]$.
\end{proof}

By symmetry, let us suppose that $|\ma(\bar 1)|=1$. WLOG let $[2,k+1]$ be the unique member of $\ma(\bar 1)$ and let $[k+1,2k]$ be one of the members of $\mb(\bar 1)$.
Using Claim~\ref{cl:5.2} and \eqref{eq:5.5} we infer that
\begin{equation}\label{eq:5.8}
    \mb(\bar 1)\subset\left\{\{j\}\cup[k+2,2k]:\, 2\le j\le  k+1\right\},
\end{equation}
in particular
\begin{equation}\label{eq:5.9}
    |\mb(\bar 1)|\le k.
\end{equation}

Together with
\[
|\ma(1)\cup\mb(1)|\le\dbinom{n-1}{k-1}-\dbinom{n-2k}{k-1}
\]
this implies
\begin{align*}
|\ma|+|\mb| &\le\dbinom{n-1}{k-1}-\dbinom{n-2k}{k-1}+1+|\mb(\bar 1)|\\
 &\le\dbinom{n-1}{k-1}-\dbinom{n-2k}{k-1}+1+k.
\end{align*}

In the case $|\mb(\bar 1)|=1$ we infer
\[
\min\left\{|\ma|,|\mb|\right\}\le\left\lfloor\frac12\left(\dbinom{n-1}{k-1}-\dbinom{n-2k}{k-1}\right)\right\rfloor+1
\]
in accordance with \eqref{eq:2.3}.

Let us show that for $n\ge k^3$ the RHS of the last displayed inequality is the value of $f^*(n,k)$.

\setcounter{theorem}{3}
\begin{theorem}
\label{th:5.4} For $n\ge k^3$
\begin{equation}\label{eq:5.10}
    f^*(n,k)=\left\lfloor\frac12\left(\dbinom{n-1}{n-k}-\dbinom{n-2k}{k-1}\right)\right\rfloor+1.
\end{equation}
\end{theorem}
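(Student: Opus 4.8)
The plan is to establish the two matching inequalities. Since $\binom{n-1}{n-k}=\binom{n-1}{k-1}$, the lower bound $f^*(n,k)\ge\big\lfloor\tfrac12\big(\binom{n-1}{k-1}-\binom{n-2k}{k-1}\big)\big\rfloor+1=:M$ is exactly Proposition~\ref{prop:2.2}, so the whole task is to prove $f^*(n,k)\le M$. First I would take an arbitrary disjoint cross-intersecting pair $\ma,\mb\subset\binom{[n]}{k}$, neither a star, and show $\min\{|\ma|,|\mb|\}\le M$. If \eqref{eq:5.1} fails this is immediate ($\min\{|\ma|,|\mb|\}$ is an integer that is at most $\tfrac12(\binom{n-1}{k-1}-\binom{n-2k}{k-1})<M$), so I may assume \eqref{eq:5.1} and feed the pair into the structural analysis already carried out in this section: Claims~\ref{cl:5.1}--\ref{cl:5.3} together with \eqref{eq:5.8} reduce everything, after relabelling, to $\ma(\bar 1)=\{[2,k+1]\}$ and $\mb(\bar 1)\subseteq\{\{j\}\cup[k+2,2k]:2\le j\le k+1\}$. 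Writing $t:=|\mb(\bar 1)|$ (so $t\le k$ by \eqref{eq:5.9}), the case $t=1$ is exactly the one already disposed of above, and what remains is $t\ge 2$.

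Next I would bound $|\ma|$ on its own. Let $J\subseteq[2,k+1]$ with $|J|=t$ be the set of ``apex'' coordinates, so that each member of $\mb(\bar 1)$ is $\{j\}\cup[k+2,2k]$ for some $j\in J$. If $1\in A\in\ma$ then $C:=A\setminus\{1\}$ must meet every member of $\mb(\bar 1)$; as these all contain the common block $[k+2,2k]$ and differ only in their apex, this forces $C\cap[k+2,2k]\ne\emptyset$ or $J\subseteq C$. Counting the $(k-1)$-subsets of $[2,n]$ of this form gives
\[
|\ma|=|\ma(1)|+1\le\binom{n-1}{k-1}-\binom{n-k}{k-1}+\binom{n-k-t}{k-1-t}+1 .
\]
Note that this uses only the cross-intersecting constraint carried by $\mb(\bar 1)$, and not disjointness, which is what keeps the estimate clean.

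It then suffices to show that the count on the right is at most $M$, i.e.\ (since it is an integer) that
\[
\binom{n-1}{k-1}-\binom{n-k}{k-1}+\binom{n-k-t}{k-1-t}\le\tfrac12\Big(\binom{n-1}{k-1}-\binom{n-2k}{k-1}\Big).
\]
Because $\binom{n-k-t}{k-1-t}$ decreases in $t$, the worst case is $t=2$, and after clearing the $\tfrac12$ the inequality becomes the second-difference estimate
\[
2\binom{n-k}{k-1}-\binom{n-1}{k-1}-\binom{n-2k}{k-1}\ \ge\ 2\binom{n-k-2}{k-3}.
\]
Writing the left-hand side as $S_2-S_1$, with $S_1=\sum_{p=2}^{k}\binom{n-p}{k-2}$ and $S_2=\sum_{p=k+1}^{2k}\binom{n-p}{k-2}$, exhibits it as a positive main term $\binom{n-2k}{k-2}$ minus a double sum of lower order, while the right-hand side is also of lower order; the two balance around $n\asymp k^3$. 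The hard part will be verifying this last inequality uniformly in $k\ge5$ for all $n\ge k^3$: this is precisely where the hypothesis enters and what makes $k^3$ the natural threshold (essentially sharp, since reversing the same estimate builds a pair with $t=2$ and $\min>M$ once $n$ drops below this range). Granting the estimate, $|\ma|\le M$, hence $\min\{|\ma|,|\mb|\}\le M$; combined with Proposition~\ref{prop:2.2} this yields $f^*(n,k)=M$.
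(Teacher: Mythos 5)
Your plan follows the paper's proof essentially verbatim: after the section's structural reduction you assume $t=|\mathcal B(\bar 1)|\ge2$, bound $|\mathcal A(1)|$ by the number of $(k-1)$-subsets of $[2,n]$ meeting every member of $\mathcal B(\bar 1)$ (your $t=2$ worst case is exactly the paper's count $\binom{n-1}{k-1}-\binom{n-k}{k-1}+\binom{n-k-2}{k-3}$), and reduce to the same second-difference inequality, which is precisely the paper's \eqref{eq:5.13}. The one step you defer --- verifying that inequality for $n\ge k^3$ --- is done in the paper by telescoping both sides into sums of the form $\binom{n-p}{k-2}$, absorbing the $2\binom{n-k-2}{k-3}+1$ term into a $\frac{1}{k+1}$-fraction of one summand, and proving the termwise ratio bound \eqref{eq:5.14} via Bernoulli's inequality; so your ``granting the estimate'' is legitimate but should be completed along those lines to make the argument a proof rather than a plan.
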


\begin{proof}
 In view of \eqref{eq:5.10}, all we have to show is  $|\mb(\bar 1)|=1$. Suppose for contradiction  $|\mb(\bar 1)|\ge2$. WLOG $[k+1,2k]$ and $\{k\}\cup[k+2,2k]$ belong
 to  $|\mb(\bar 1)|$. Then for $A\in\ma(1)$ either
 \begin{align*}
& A\cap[k+2,2k]\neq\emptyset\quad\text{or}\\
& A\cap[k+2,2k]=\emptyset\quad\text{but }\{k,k+1\}\subset A.
 \end{align*}
 The number of $(k-1)$-sets $A\subset[2,n]$ satisfying these conditions is
 $
\binom{n-1}{k-1}-\binom{n-k}{k-1}+\binom{n-k-2}{k-3}.
 $
 In view of $|\ma(\bar 1)|=1$, to conclude the proof it is sufficient to show
\begin{small} \begin{equation}\label{eq:5.12}
\dbinom{n-1}{k-1}-\dbinom{n-k}{k-1}+\dbinom{n-k-2}{k-3}<\frac12\left(\dbinom{n-1}{k-1}-\dbinom{n-2k}{k-1}\right)-\frac12.
 \end{equation}\end{small}
 Equivalently,
 \begin{align}\label{eq:5.13}
\dbinom{n-2}{k-2}+\dots+&\dbinom{n-k}{k-2}<\\
\notag \dbinom{n-k-1}{k-2}+\dbinom{n-k-2}{k-2}+\dots+&\dbinom{n-2k}{k-2}-\left(2\dbinom{n-k-2}{k-3}+1\right).
\end{align}
The last term in brackets is of smaller order of magnitude. E.g., for $n-k>4(k^2-4)$ it is smaller than $\frac{1}{2(k+2)}\binom{n-k-1}{k-2}$.

We break up $\binom{n-k-1}{k-2}$ into $k+1$ equal parts and use $\frac1{k+1}\binom{n-k-1}{k-2}>\frac1{k+1}\binom{n-k-j}{k-2}$ for $j \ge2$. One of these terms we use to compensate for the last term in \eqref{eq:5.13}. Consequently, instead of
\eqref{eq:5.13} it suffices to show that
\[
\dbinom{n-2}{k-2}+\dots+\dbinom{n-k}{k-2}<\frac{k+2}{k+1}\dbinom{n-k-2}{k-2 }+\dots+\frac{k+2}{k+1}\dbinom{n-2k}{k-2}.
\]
This inequality follows once we show
\begin{equation}\label{eq:5.14}
    \dbinom{n-k-j}{k-2}\left/\dbinom{n-j}{k-2}\right.>\frac{k+1}{k+2}\quad\text{for }2\le j\le k.
\end{equation}
Let us expand the LHS of \eqref{eq:5.14} and use the Bernoulli inequality
\[
\frac{\dbinom{n-k-j}{k-2}}{\dbinom{n-j}{k-2}}=\prod_{0\le i\le k-3}\left(1-\frac{k}{n-j-i}\right)>\left(1-\frac{k}{n-2k}\right)^{k-2} >1-\frac{k(k-2)}{n-2k}.
\]
Now $n\ge k^3$ implies $ n-2k\ge k^3-2k> (k+2)k(k-2)$, i.e., the RHS is at least $1-\frac1{k+2}=\frac{k+1}{k+2}$ completing the proof.
\end{proof}

Let us mention that our argument was essentially sharp, that is for $n<(1-\varepsilon)k^3$ the inequality \eqref{eq:5.12} would fail completely. That is the
difference of the two sides would be much more than $k$. Consequently, imitating the proof of Proposition~\ref{prop:2.2} we can show the following.

\setcounter{proposition}{4}
\begin{proposition}\label{prop:5.5}
For any $\varepsilon>0$ and $k>k_0(\varepsilon)$ in the range $k^2+5k<n<(1-\varepsilon)k^3$ one has
\[
f^*(n,k)= \left\lfloor \frac12 \left(\dbinom{n-1}{k-1}-\dbinom{n-2k}{k-1}+k+1\right)\right\rfloor.
\]
\end{proposition}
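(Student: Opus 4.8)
Write $M=\binom{n-1}{k-1}-\binom{n-2k}{k-1}$, so that the claimed value is $\lfloor\tfrac12(M+k+1)\rfloor$. My plan is to sandwich $f^*(n,k)$ between this value from above—using the structural analysis already carried out in this section—and the same value from below, via a construction imitating Proposition~\ref{prop:2.2}. For the upper bound I would reuse the reasoning preceding Theorem~\ref{th:5.4} verbatim: for any disjoint cross-intersecting $\ma,\mb\subset\binom{[n]}{k}$, neither a star, with $\min\{|\ma|,|\mb|\}>\tfrac12M$ we showed (after reducing to $|\ma(\bar1)|=1$ and invoking \eqref{eq:5.9}) that $|\ma|+|\mb|\le M+1+|\mb(\bar1)|\le M+1+k$, whence $\min\{|\ma|,|\mb|\}\le\lfloor\tfrac12(M+k+1)\rfloor$; and if instead $\min\{|\ma|,|\mb|\}\le\tfrac12M$ the same bound is trivial, since $\tfrac12M\le\lfloor\tfrac12(M+k+1)\rfloor$. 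Hence $f^*(n,k)\le\lfloor\tfrac12(M+k+1)\rfloor$ with no restriction on $n$ beyond those of this section.

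The matching construction replaces the single set $Q$ of Proposition~\ref{prop:2.2} by the $k$ sets $Q_j=\{j\}\cup[k+2,2k]$, $2\le j\le k+1$, keeping $P=[2,k+1]$, so that $P\cap Q_j=\{j\}$. I would set
\[
\ma=\bigl\{A:\,1\in A,\ A\cap Q_j\neq\emptyset\ (2\le j\le k+1)\bigr\}\cup\{P\},\qquad \mb=\bigl\{B:\,1\in B,\ B\cap P\neq\emptyset\bigr\}\cup\{Q_2,\dots,Q_{k+1}\}.
\]
Since a $(k-1)$-set cannot contain all of $[2,k+1]$, a member $\{1\}\cup C$ of $\ma$ meets every $Q_j$ iff $C$ meets $[k+2,2k]$. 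One checks directly that $\ma,\mb$ are cross-intersecting, that neither is a star, and that $\ma\cap\mb$ consists precisely of the $\{1\}\cup C$ with $C$ meeting both $[2,k+1]$ and $[k+2,2k]$. Removing the members of $\ma\cap\mb$ one at a time, each from exactly one family, produces disjoint $\ma_0,\mb_0$ with $|\ma_0|+|\mb_0|=M+1+k$.

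The remaining point is that these removals can be distributed to balance the families, i.e.\ so that $\min\{|\ma_0|,|\mb_0|\}=\lfloor\tfrac12(M+k+1)\rfloor$. As everything removed lies in $\ma\cap\mb\subset\ma$, the size $|\ma_0|$ ranges over all integers between its minimum $\bigl|\{C:\,C\cap[k+2,2k]\ne\emptyset,\,C\cap[2,k+1]=\emptyset\}\bigr|+1$ and its maximum $\binom{n-1}{k-1}-\binom{n-k}{k-1}+1$. The balanced value is attainable exactly when it does not exceed this maximum, which rearranges to the single inequality
\[
\binom{n-1}{k-1}-2\binom{n-k}{k-1}+\binom{n-2k}{k-1}\ \ge\ k-1
\]
(that the minimum endpoint lies below $\lfloor\tfrac12(M+k+1)\rfloor$ follows from convexity of $m\mapsto\binom{m}{k-1}$, for all admissible $n$). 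This is precisely the assertion that \eqref{eq:5.12} fails by more than $k$, as flagged in the remark preceding the proposition.

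This second-difference inequality is the crux, and verifying it cleanly is the main obstacle. Writing $r=\binom{n-k}{k-1}/\binom{n-1}{k-1}$ and $s=\binom{n-2k}{k-1}/\binom{n-1}{k-1}$, the left side equals $\binom{n-1}{k-1}(1-2r+s)$, and from $r=e^{-(1+o(1))(k-1)^2/n}$ and $s=r\,e^{-(1+o(1))k(k-1)/n}$ one obtains
\[
1-2r+s=(1-r)^2+(s-r^2)\approx\frac{(k-1)^2}{n}\Bigl(\frac{(k-1)^2}{n}-\frac1{k-1}\Bigr),
\]
so the sign flips around $n\approx(k-1)^3$. For $n<(1-\varepsilon)k^3$ and $k>k_0(\varepsilon)$ one has $n<(k-1)^3$ with room to spare, the bracket is of order $\varepsilon/k$, and the left side is of order $\varepsilon\binom{n-1}{k-1}/k^2$, which dwarfs $k-1$ already because $n\ge k^2+5k$ makes $\binom{n-1}{k-1}$ enormous; so balancing succeeds and $f^*(n,k)=\lfloor\tfrac12(M+k+1)\rfloor$. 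The hard part is carrying out this estimate with enough care to control $(1-2r+s)$ down to the order of $k/\binom{n-1}{k-1}$, and in particular to locate the threshold at $(k-1)^3$; the same computation shows the second difference is negative once $n\ge k^3$, which is why the construction cannot beat $\lfloor\tfrac12M\rfloor+1$ there and the range is essentially sharp, dovetailing with Theorem~\ref{th:5.4}.
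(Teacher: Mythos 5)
Your proposal is correct and follows exactly the route the paper intends: the upper bound is the Section 5 analysis culminating in $|\mathcal A|+|\mathcal B|\le M+1+k$, and the lower bound imitates Proposition~\ref{prop:2.2} with $P=[2,k+1]$ against the $k$ sets $Q_j=\{j\}\cup[k+2,2k]$, the balancing condition being precisely the reversal of \eqref{eq:5.12} by more than $k$, which is where the threshold $(1-\varepsilon)k^3$ enters. The paper gives no details beyond this sketch, so your worked-out construction, the explicit second-difference criterion $\binom{n-1}{k-1}-2\binom{n-k}{k-1}+\binom{n-2k}{k-1}\ge k-1$, and the asymptotic location of its sign change near $(k-1)^3$ are a faithful (indeed more complete) rendering of the intended argument.
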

\hfill $\square$

\small

\end{document}